\newtheorem{thm}{Theorem}
\newtheorem{cor}{Corollary}
\newtheorem{lem}{Lemma}[section]
\newtheorem{prop}{Proposition}
\theoremstyle{definition}
\newtheorem{question}{Question}
\newtheorem{defn}[section]{Definition}
\newtheorem{rem}{Remark}[section]
\title{Integrability of \( C^1\) invariant splittings }
\author[1]{Stefano Luzzatto, Sina Tureli, Khadim War}
\date{March 26, 2015}
\begin{document}
\maketitle

\begin{abstract}
We derive some new conditions for integrability of dynamically defined $C^1$ invariant splittings in arbitrary dimension and co-dimension. In particular we prove that every 2-dimensional \(  C^{1}  \) invariant decomposition on a 3-dimensional manifold satisfying a volume domination condition is uniquely integrable. In the special case of volume preserving diffeomorphisms we show that standard dynamical domination is already sufficient to guarantee unique integrability.  
\end{abstract}

\section{Introduction and Statement of Results}
Let \( M \) be a  smooth manifold and \( E\subset TM \)
a  distribution of tangent hyperplanes.  A classical problem concerns the \emph{(unique) integrability} of such a distribution, i.e. the existence at every point of a (unique) local embedded submanifold  everywhere tangent to the given distribution
 For one-dimensional distributions this is simply
 the existence and uniqueness of solutions of ordinary differential equations which is well known to hold  for
 Lipschitz distributions on compact manifolds. In higher dimensions,
 however, the question is highly non trivial and generally false  even if the distribution is smooth \cite{Lee}. 
 Some results can be obtained  however if the distribution \( E \) is associated to a diffeomorphism satisfying certain conditions.

 \subsection{Dynamical domination}
 Let \( \phi: M \to M \) be a \( C^2 \)
 diffeomorphism and  \(
 E \oplus F
 \)   a \( D\phi \)-invariant \( C^1 \) decomposition. 
 For \( x\in M \) we let \( \|D\phi_x|_E\|=\max_{v\in E}\|Df_x(v)\|/\|v\| \) denote the operator norm of \( D\phi_x \) restricted to \( E \), and let \( m(D\phi_x|_F) =\min_{v\in F, v\neq 0}\|D\phi_x(v)\|/\|v\|   \) denote the co-norm of \( D\phi_x|_F \).
The decomposition  \(
 E \oplus F
 \) is \emph{dynamically dominated} if 
 \begin{equation}\label{dyndom}
 \frac{\|D\phi_x|_E\|}{m(D\phi_x|_F)}<1
 \end{equation}
 for every \( x\in M \). Our first result is the following. 
 
 \begin{thm}\label{thm:vol}
Let  \( M \) be a 3-dimensional Riemannian manifold, \( \phi: M \to M \) a volume preserving \( C^2 \) diffeomorphism, and \( E\oplus F \) a \( D\phi \)-invariant \( C^1 \) dominated decomposition with \( dim (E) =2 \). 
 Then \( E \) is uniquely integrable. 
 \end{thm}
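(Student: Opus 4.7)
The plan is to establish involutivity of $E$ by a dynamical manipulation of the Frobenius obstruction and then invoke the classical $C^1$ version of the Frobenius theorem. I would work locally: on a trivializing chart, pick a $C^1$ nonvanishing $1$-form $\omega$ with $\ker\omega = E$, so that $\omega\wedge d\omega = f\,\Omega$ for a continuous function $f$ and a fixed local volume form $\Omega$; involutivity on the chart is equivalent to $f\equiv 0$. The invariance $D\phi(E)=E$ forces $\phi^*\omega = \lambda\,\omega$ for a continuous function $\lambda$, and evaluating $\phi^*\omega$ on a local unit generator $Z$ of $F$ yields $|\lambda(x)| = m(D\phi_x|_F)\cdot g(\phi x)/g(x)$ where $g := |\omega(Z/\|Z\|)|$ is bounded away from $0$ and $\infty$ (dominated splittings on compact manifolds enjoy a uniform bound on the angle between $E$ and $F$). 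Taking exterior derivatives, using $\omega\wedge\omega=0$, and combining with $\phi^*\Omega = (\det D\phi)\,\Omega$ and the volume-preservation hypothesis $\det D\phi\equiv 1$ produces the cocycle identity
\[
f\circ \phi \;=\; \lambda^2\,f.
\]

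The core quantitative step is to show that $m(D\phi^n_x|_F)$ diverges exponentially, uniformly in $x$. Volume preservation combined with the uniform angle bound gives a constant $C>0$ with
\[
\tfrac{1}{C}\;\le\;\bigl|\det(D\phi^n_x|_E)\bigr|\cdot m(D\phi^n_x|_F)\;\le\;C
\]
for every $x$ and $n$. Writing $|\det(D\phi^n|_E)| = \|D\phi^n|_E\|\cdot m(D\phi^n|_E) \le \|D\phi^n|_E\|^2$ and using the $n$-step exponential form of dynamical domination $\|D\phi^n|_E\|\le C'\mu^n\,m(D\phi^n|_F)$ with $\mu<1$ (a standard consequence on compact $M$ of the pointwise assumption \eqref{dyndom}) one deduces $m(D\phi^n_x|_F)^3 \gtrsim \mu^{-2n}$, giving uniform exponential growth.

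Iterating the cocycle, $|f(\phi^n x)|$ grows at least like $m(D\phi^n_x|_F)^2 |f(x)|$ up to uniformly bounded transition factors between charts; continuity of $f$ and compactness of $M$ then force $f(x)=0$ at every $x$, i.e., $E$ is involutive, and the $C^1$ Frobenius theorem delivers unique integrability. The main obstacle is the quantitative bookkeeping in the volume-preservation identity — carefully tracking the angle factors that relate $|\det(D\phi|_E)|$ to $m(D\phi|_F)$ — together with verifying that the local Frobenius obstruction $f$, which is only a section of a line bundle rather than a globally defined function, nevertheless yields a coherent quantity to iterate; both are manageable thanks to the uniform angle bound and to the covariant transformation law of $\lambda^2 f$ under rescalings of $\omega$.
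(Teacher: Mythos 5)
Your proof is correct, but it takes a genuinely different route from the paper's. The paper deduces Theorem \ref{thm:vol} from Remark \ref{dynvoldom} (dynamical domination plus volume preservation implies volume domination) and then from the general Theorem \ref{thm-main}, whose proof builds, for each iterate \(k\), a local frame of \(E\) adapted to the singular directions of \(D\phi^k|_E\) with brackets lying in \(F\) at the base point, bounds \(|\Pi[Z,W]|\) by those brackets, and controls them by \(s^k_d s^k_{d-1}/m(D\phi^k|_F)\) via the formula \(\eta([X,Y])=X\eta(Y)-Y\eta(X)+d\eta(X,Y)\), finishing with a liminf/subsequence argument. You instead exploit the special structure of codimension one in dimension three: the Frobenius obstruction is the scalar \(f\) in \(\omega\wedge d\omega=f\,\Omega\), and invariance \(\phi^*\omega=\lambda\omega\) together with volume preservation packages the entire Cartan computation into the cocycle \(f\circ\phi=\lambda^2 f\) with \(|\lambda|\asymp m(D\phi|_F)\). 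Your quantitative step --- \(m(D\phi^n|_F)^3\gtrsim\mu^{-2n}\) from \(|\det(D\phi^n|_E)|\,m(D\phi^n|_F)\asymp 1\) and the \(n\)-step form \(\|D\phi^n|_E\|\le\mu^n m(D\phi^n|_F)\) of \eqref{dyndom} --- is Remark \ref{dynvoldom} in iterated form, and your final estimate \(|f|\lesssim m(D\phi^n|_F)^{-2}\) coincides, after using volume preservation, with the paper's bound \(\det(D\phi^n|_E)/m(D\phi^n|_F)\) from Lemma \ref{lem-upperbound}. What your approach buys is a short, self-contained, fully quantitative proof: exponential decay of the obstruction, no subsequence extraction, no singular-value frames. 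What it gives up is generality: the identity \(\phi^*(\omega\wedge d\omega)=\lambda^2\,\omega\wedge d\omega\) is special to hyperplane fields on \(3\)-manifolds, where \(\omega\wedge d\omega\) is a top form, so it does not yield Theorem \ref{thm-main} (or even Theorem \ref{thmdim3} under only the liminf condition \eqref{3dimdom} without a uniform rate) in higher dimension or codimension, where the obstruction is no longer a scalar multiple of a fixed volume form. The points you flag are indeed harmless: taking \(\omega=g(N,\cdot)\) for a local unit normal \(N\) to \(E\) and \(\Omega\) the Riemannian volume density makes \(|f|\) and \(|\lambda|\) globally defined continuous functions, which is all the cocycle needs; the only small imprecision is that \(\lambda\) should be recorded as \(C^1\) (not merely continuous), since the Leibniz step \(d(\lambda\omega)=d\lambda\wedge\omega+\lambda\,d\omega\) in deriving the cocycle requires it, and this does hold because \(\phi\) is \(C^2\) and \(\omega\) is \(C^1\). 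Implicitly you also use compactness of \(M\) (for the uniform angle bound, the uniform \(\mu<1\), and the boundedness of \(f\)); this is not stated in Theorem \ref{thm:vol} but is assumed throughout the paper.
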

 
 We will obtain Theorem \ref{thm:vol} as a special case of more general results to be stated below, and while we postpone a full discussion of existing literature, we mention that this does not follow from any existing result.

  \subsection{Volume domination}
 We say that the decomposition  \( E \oplus F  \) is \emph{volume dominated} if 
\begin{equation}\label{voldom}
\frac{|det{D\phi_x|_E}|}{|det D\phi_x|_F|} < 1
\end{equation}
for all \( x\in M \) and is \emph{inverse volume dominated} if 
\begin{equation}\label{invvoldom}
\frac{|det{D\phi^{-1}_x|_E}|}{|det D\phi^{-1}_x|_F|} < 1
\end{equation}
for all \( x\in M \).

\begin{rem}
Notice that \eqref{voldom} and \eqref{invvoldom} are exclusive conditions since \eqref{invvoldom}
can be written as \( {|det{D\phi_x|_E}|}/{|det D\phi_x|_F|} > 1  \) for all \(  x\in M  \) which is the exact opposite of \eqref{voldom}. The two conditions as stated cannot be combined into a unique condition of the form \(  {|det{D\phi_x|_E}|}/{|det D\phi_x|_F|} \neq 1  \) since each one is formulated as a global condition on all points \(  x\in M  \) (though our result to be stated below actually allows significantly  more flexibility, see Remark \ref{weakdom} below).
\end{rem}

\begin{rem}\label{dynvoldom}
In  general dynamical domination and volume domination are independent conditions, 
but  in the 3-dimensional volume preserving setting with \(  dim (E) =2  \),  as in Theorem \ref{thm:vol}, dynamical domination implies volume domination.  Indeed,   the volume preservation property implies  \( {det{D\phi_x|_E}}\cdot {det D\phi_x|_F} =1\) and so \eqref{dyndom} implies \(  {det D\phi_x|_F} > 1  \) (arguing by contradiction, \(  {det D\phi_x|_F}\leq 1  \) would imply  \(  det D\phi_{x}|_{E}\geq 1  \) by the volume preservation, and this would imply \(  \|D\phi_{x}|_{E}\|/ det D\phi_{x}|_{F}\geq 1   \) which, using the fact that \(  F  \) is one-dimensional and therefore   \(  det D\phi_x|_F = m(D\phi_x|_F) \),    would contradict \eqref{dyndom}). Dividing the equation  \( {det{D\phi_x|_E}}\cdot {det D\phi_x|_F} =1\)  through by \(  ({det D\phi_x|_F})^{2}  \) we get \eqref{voldom}.
\end{rem}

By Remark \ref{dynvoldom}, Theorem \ref{thm:vol} is a consequence of the following more general  result which replaces the volume preserving property and dynamical domination with volume domination or inverse volume domination. 

\begin{thm}\label{thmdim3}  
Let  \( M \) be a 3-dimensional Riemannian manifold, \( \phi: M \to M \) a  \( C^2 \) diffeomorphism, and \( E\oplus F \) a \( D\phi \)-invariant \( C^1 \)  volume dominated or inverse volume dominated decomposition with \( dim (E) =2 \). 
 Then \( E \) is uniquely integrable. 
\end{thm}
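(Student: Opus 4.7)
The plan is to reduce unique integrability of $E$ to the pointwise vanishing of a continuous scalar Frobenius obstruction $c$, and then to use the $D\phi$-invariance of $E\oplus F$ together with (inverse) volume domination to force $c\equiv 0$ by iterating an induced transfer identity.

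Since $E$ is a $C^1$ distribution of codimension one, I can locally represent $E$ as the kernel of a nowhere-vanishing $C^1$ $1$-form $\omega$. Then $\omega\wedge d\omega$ is a continuous $3$-form, and writing $\omega\wedge d\omega = c\,\nu$ for a chosen nonvanishing volume form $\nu$ defines a continuous function $c$ whose zero set does not depend on $\omega$ or $\nu$. By a $C^1$ Frobenius-type theorem for codimension-one distributions (e.g.\ Hartman), $c\equiv 0$ is equivalent to unique integrability of $E$, so the task reduces to showing $c\equiv 0$.

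The invariance $D\phi(E)=E$ gives $\phi^*\omega=\lambda\,\omega$ for some continuous, nowhere-zero $\lambda$, and a direct exterior-algebra calculation, using $\omega\wedge d\lambda\wedge\omega=0$, yields $\phi^*(\omega\wedge d\omega)=\lambda^2(\omega\wedge d\omega)$. Combined with $\phi^*\nu=(\det D\phi)\,\nu$ this produces the pointwise transfer identity
\[
(c\circ\phi)(x)\cdot\det D\phi_x \;=\; \lambda(x)^2\, c(x).
\]
Reading $\lambda$ and $\det D\phi$ off in a local frame $(e_1,e_2,f)$ adapted to $E\oplus F$ identifies $|\lambda|$ with $|\det D\phi_x|_F|$ and $|\det D\phi_x|$ with $|\det D\phi_x|_E|\cdot|\det D\phi_x|_F|$, up to a continuous, strictly positive factor (arising from frame norms and the angle between $E$ and $F$) bounded uniformly away from $0$ and $\infty$ on compact sets. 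This reduces the transfer identity to
\[
|c(\phi(x))| \;=\; g(x)\,\frac{|\det D\phi_x|_F|}{|\det D\phi_x|_E|}\,|c(x)|
\]
for some continuous, positive, uniformly bounded $g$.

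Assuming volume domination and compactness of $M$ (or any uniform version of the hypothesis), there is $\rho>1$ with $|\det D\phi_x|_F|/|\det D\phi_x|_E|\geq \rho$ everywhere, and iterating gives $|c(\phi^n x)|\geq C\rho^{n}|c(x)|$; since $c$ is continuous on the compact $M$ and hence bounded, this forces $c(x)=0$ for every $x$. The inverse volume dominated case is treated identically by applying the argument to $\phi^{-1}$, which then satisfies the volume domination hypothesis by \eqref{invvoldom}. The main technical obstacle I anticipate lies in the limited regularity: $\omega$ is only $C^1$, so $d\omega$ is merely continuous, and one must carefully verify both the transfer identity at this regularity and the availability of a genuinely $C^1$ Frobenius-type statement for codimension-one distributions giving unique (and not just local) integrability.
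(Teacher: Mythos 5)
Your approach is sound and genuinely different from the paper's. The paper does not prove this theorem directly: it deduces it (via Remark \ref{weakdom}) from the general Theorem \ref{thm-main}, whose proof works in arbitrary dimension and codimension by building, for each iterate $k$, a local frame of $E$ adapted to the singular vectors of $D\phi^k|_E$ with brackets pointing into $F$ at the base point, and then squeezing $|\Pi[Z,W]|$ between the Cartan identity $\eta([X,Y])=X\eta(Y)-Y\eta(X)-d\eta(X,Y)$ applied to pushed-forward frames and the co-norm of $D\phi^k|_F$. Your argument instead exploits codimension one to package the Frobenius obstruction into the single continuous scalar $c$ with $\omega\wedge d\omega=c\,\nu$ and kills it with the multiplicative cocycle $(c\circ\phi)\det D\phi=\lambda^2 c$. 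What you lose is generality: this has no analogue for $\dim F\geq 2$ or $\dim E\geq 3$, and it genuinely uses the \emph{uniform} domination $|\det D\phi|_F|/|\det D\phi|_E|\geq\rho>1$ (via compactness), whereas the paper's mechanism only needs the liminf condition \eqref{3dimdom} on a dense set of points. What you gain is a much shorter and more transparent proof of the three-dimensional statement, with an exact identity rather than estimates.

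Two points need tightening. First, the iteration step as written has a gap: from $|c(\phi(x))|=g(x)\,\rho(x)\,|c(x)|$ with $g$ merely bounded in $(0,\infty)$ you cannot conclude $|c(\phi^n x)|\geq C\rho^n|c(x)|$, since $\prod_{i<n}g(\phi^i x)$ could a priori decay exponentially and cancel $\rho^n$. The fix is to iterate the \emph{exact} identity first, $(c\circ\phi^n)\det D\phi^n=(\lambda^{(n)})^2c$ with $(\phi^n)^*\omega=\lambda^{(n)}\omega$, and only then convert $|\lambda^{(n)}|$ and $|\det D\phi^n|$ into $|\det D\phi^n|_F|$ and $|\det D\phi^n|_E|\cdot|\det D\phi^n|_F|$: the correction factors are ratios of angle/normalization quantities evaluated at $x$ and at $\phi^n(x)$ only, hence uniformly bounded in $n$ (equivalently, your $g$ is a continuous coboundary, so its Birkhoff products telescope). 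Second, $E$ need not be coorientable, so a global $C^1$ form $\omega$ may not exist; but taking $\omega$ locally as the unit-norm annihilator of $E$ makes $|c|$ a globally well-defined continuous function (a change $\omega\mapsto f\omega$ replaces $c$ by $f^2c$), which is all the argument requires. With these repairs the proof is correct; the $C^1$ regularity is exactly enough for $d\omega$ to exist continuously, for $d\phi^*=\phi^*d$ to hold, and for the $C^1$ Frobenius theorem (involutivity $\Leftrightarrow$ unique integrability) that the paper itself invokes.
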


\begin{rem}\label{weakdom}
We remark that our argument actually requires conditions which are  a-priori weaker than volume or inverse volume domination. Indeed, we just require that 
\begin{equation}\label{3dimdom}
\liminf_{k \rightarrow \infty}\frac{det{D\phi^k_x|_E}}{det D\phi^k_x|_F} =0
\quad \text{ and/or } \quad 
\liminf_{k \rightarrow \infty}\frac{det{D\phi^{-k}_x|_E}}{det D\phi^{-k}_x|_F} =0
\end{equation}
for every \(  x\in M  \). 
The two conditions in \eqref{3dimdom} are clearly formally implied by volume domination and inverse volume domination respectively\footnote{It may be that in practice \eqref{3dimdom} is actually equivalent to the volume domination condition but we have not been able to give a formal argument for this. This equivalence would be clear if we replaced the liminf by a limit since then, by compactness of \(  M  \) the convergence would be uniform in \(  x  \) and we would be able to find some \(  k  \) for which the ratio was \(  <1  \) for all \(  x  \). With the liminf condition it is not clear that this is possible.}.
Most importantly, however, we emphasize  that condition \eqref{3dimdom} allows for the possibility that both conditions hold simultaneously along different orbits, which may actually be a significant weakening of the volume domination and inverse volume domination conditions as stated above. Moreover we also remark that it is sufficient for either of these conditions to hold on a  dense subset  \( \mathcal A\subset M \). 
 \end{rem}
 
\subsection{Weak domination}
Both Theorems \ref{thm:vol} and \ref{thmdim3} are actually special cases of our most general and main result which gives conditions for integrability of distributions in arbitrary dimension and arbitrary co-dimension. 
From now on we let 
\( M \) be a compact Riemannian manifold of  \( dim(M)\geq 3 \), \( \varphi: M \to M \)  a \( C^2 \) diffeomorphism and 
\( E\oplus F
\) a \(  D\phi  \)-invariant decomposition  with \( dim(E)=d\geq 2 \) with and \(dim(F)=\ell \geq 1\). 
We generalize the volume domination and inverse domination conditions stated above as follows. 
 For every \( k \geq 1 \) we let
\(
s^k_1(x) \leq s^k_2(x) \leq .. s^k_d(x)
\) and \( 
 s^{-k}_1(x)\geq s^{-k}_2(x) \geq .. s^{-k}_d(x)
\)
 denote the singular values\footnote{We recall that the singular values of \(  D\phi^{\pm k}_x|_{E}  \) at \(  x\in M  \) are the square roots of the eigenvalues of the
self-adjoint map
\(
  (D\varphi^{\pm k}_{x}|_{E})^{\dag}\circ D\varphi^{\pm k}_{x}|_{E}:E(x) \to E(x)
\)
   where
\(   (D\varphi^{\pm k}_{x})^{\dag}|_{E}: E(\varphi^{\pm k}(x))\to E(x)  \) is the conjugate of  \(D\varphi^{\pm k}_{x}|_{E}\) with respect to the metric
\(  g  \), i.e. the unique map which satisfies \(  g(D\varphi^{\pm k}_{x}|_{E}u, v) = g(u,(D\varphi^{\pm k}_{x}|_{E})^{\dag}v)\) for all \(  u\in E(x), v\in E(\varphi^{\pm k}(x)). \)}
 of \( D\phi^k_x|_{E}\) and    \( D\phi^{-k}_x|_{E} \) respectively, and let 
\(
r^k_1(x) \leq r^k_2(x) \leq .. r^k_{\ell}(x)
\) 
and \( 
 r^{-k}_1(x)\geq r^{-k}_2(x) \geq .. r^{-k}_{\ell}(x)
\)
 denote the singular values of \( D\phi^k_x|_{F}\) and    \( D\phi^{-k}_x|_{F} \) respectively.

\begin{thm}\label{thm-main}
Let \( M \) be a compact Riemannian manifold of  \( dim(M)\geq 3 \), \( \varphi: M \to M \)  a \( C^2 \) diffeomorphism, and \( E\oplus F
\)  a
 \( D\phi \)-invariant
\( C^1 \) tangent bundle splitting
with \( dim(E)=d \geq 2 \) and \(dim(F)=\ell \geq 1\).
Suppose there exists a dense subset  \( \mathcal A\subset M \)  such that for every \( x\in \mathcal A \) we have
\begin{equation*}\tag{\ensuremath{\star}}\label{star}
\liminf_{k \rightarrow \infty}\frac{s^k_{d-1}(x)s^k_d(x)}{r^k_1(x)} =0
\quad\text{ and/or } \quad
\liminf_{k \rightarrow \infty}\frac{s^{-k}_{1}(x)s^{-k}_2(x)}{r^{-k}_\ell(x)} =0.
\end{equation*}
Then \( E \) is uniquely integrable.
\end{thm}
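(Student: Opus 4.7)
The plan is to reduce unique integrability of $E$ to the vanishing of a continuous obstruction tensor, and then to force this tensor to vanish by iterating it under $D\phi^{\pm k}$ and exploiting the singular value hypothesis. Since $E$ is $C^1$, local $C^1$ sections of $E$ exist on any sufficiently small neighborhood and their Lie brackets are continuous. Letting $\pi_F:TM\to F$ denote the continuous bundle projection along $E$, I define
$$\omega_x(u,v)\;=\;\pi_F\bigl([X,Y]_x\bigr)\in F(x),\qquad u,v\in E(x),$$
where $X,Y$ are any $C^1$ local sections of $E$ with $X(x)=u$, $Y(x)=v$. A standard tensorial computation shows $\omega_x(u,v)$ is independent of the extensions, so $\omega$ is a continuous, alternating, bilinear bundle map from $E$ to $F$. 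Invariance of $E$ and $F$ under $D\phi$, together with the naturality of the Lie bracket and the fact that the $F$-projection vanishes on $E$, yields the equivariance identity
$$\omega_{\phi^k(x)}\bigl(D\phi^k_x u,\,D\phi^k_x v\bigr)\;=\;D\phi^k_x\bigl(\omega_x(u,v)\bigr)\qquad\text{for every }k\in\mathbb{Z}.$$

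The main step is the singular value estimate. Since $\omega$ is continuous and $M$ compact, there is a constant $C>0$ with $|\omega_y(u',v')|\le C\,|u'\wedge v'|$ for all $y\in M$ and $u',v'\in E(y)$. Applying this to $u'=D\phi^k_x u$, $v'=D\phi^k_x v$, combining it with the elementary singular value bound $|D\phi^k_x u\wedge D\phi^k_x v|\le s^k_{d-1}(x)s^k_d(x)|u\wedge v|$, and observing that $\omega_x(u,v)\in F(x)$ so that $|D\phi^k_x\omega_x(u,v)|\ge r^k_1(x)|\omega_x(u,v)|$, the equivariance identity produces
$$|\omega_x(u,v)|\;\le\;C\,\frac{s^k_{d-1}(x)\,s^k_d(x)}{r^k_1(x)}\,|u\wedge v|.$$
The same argument applied to $\phi^{-1}$ yields the companion bound with $s^{-k}_1(x)s^{-k}_2(x)/r^{-k}_\ell(x)$ in place of the ratio above. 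Passing to the $\liminf$ and invoking \eqref{star}, I conclude $\omega_x\equiv 0$ for every $x\in\mathcal A$; by continuity of $\omega$ and density of $\mathcal A$, it follows that $\omega\equiv 0$ on all of $M$, i.e.\ $E$ is involutive in the sense that $[X,Y](x)\in E(x)$ for all $C^1$ local sections $X,Y$ of $E$.

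The remaining step, which I expect to be the main obstacle, is to pass from involutivity to unique integrability when $E$ is only $C^1$ and the brackets are merely continuous. The classical Frobenius theorem is ordinarily stated under $C^\infty$ or $C^2$ regularity so that one can flow by $C^1$ vector fields; here that regularity is unavailable. I would therefore invoke a $C^1$ version of the Frobenius theorem (of the kind developed by the authors in their prior work, and in related form by Hartman, Simic, and Rampazzo--Sussmann), which guarantees both existence and uniqueness of integral manifolds tangent to an involutive $C^1$ distribution. With that ingredient available, Theorem~\ref{thm-main} follows, and Theorems~\ref{thm:vol} and~\ref{thmdim3} are immediate via Remarks~\ref{dynvoldom} and~\ref{weakdom}, since the volume-domination conditions translate directly into the singular-value ratios appearing in~\eqref{star}.
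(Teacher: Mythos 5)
Your proof is correct, and it takes a genuinely different (and leaner) route to involutivity than the paper does. The paper works at a fixed point $x_0\in\mathcal A$ with a sequence of adapted local frames $\{Y^k_i\}$ (Lemma \ref{lem:frob}) whose values at $x_0$ are the singular vectors of $D\phi^k|_E$ and whose brackets at $x_0$ lie in $F_{x_0}$; it bounds $|\Pi[Z,W]_{x_0}|$ by the maximal bracket of these frames (Lemma \ref{lem-orthlie}), and then obtains the ratio bound of Lemma \ref{lem-upperbound} by pushing the bracket forward, extracting a convergent subsequence of normalized images $\hat Y_{k_m}$, and pairing with a $1$-form $\eta$ annihilating $E$ via Cartan's formula. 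Your argument packages the same obstruction into the continuous tensor $\omega\in\mathrm{Hom}(\Lambda^2E,F)$ (tensoriality holds because the error terms from changing extensions are sections of $E$, killed by $\pi_F$; note your $\omega$ is essentially the paper's $-d\eta|_{E\times E}$ read through $\eta$), proves the exact equivariance $\omega_{\phi^k(x)}\circ\Lambda^2D\phi^k_x=D\phi^k_x\circ\omega_x$, and then the inequality
\[
r^k_1(x)\,|\omega_x(u,v)|\;\le\;|D\phi^k_x\omega_x(u,v)|\;\le\;C\,s^k_{d-1}(x)s^k_d(x)\,|u\wedge v|
\]
follows from compactness of $M$ alone. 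This buys you two real simplifications: you never need the subsequence/accumulation-point construction of $\hat Y$ and the local section $\eta$ with $\eta(\hat Y_{k_m})>c$, and you never need the adapted frames of Lemma \ref{lem:frob} at all, since the tensoriality of $\omega$ does the work that the condition $[Y^k_i,Y^k_j]_{x_0}\in F_{x_0}$ does in the paper. The final step is handled identically in both arguments: the paper also concludes via the $C^1$ Frobenius theorem (involutive $\Leftrightarrow$ uniquely integrable for $C^1$ distributions, citing Lee and Agrachev--Sachkov) together with the observation that non-involutivity is open, which is exactly your ``$\omega$ continuous and vanishing on a dense set, hence identically zero''; so the point you flag as the main obstacle is the one the paper simply quotes as classical.
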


Notice that  \eqref{3dimdom} is equivalent to \(  (\star)  \) in the three-dimensional setting, and therefore implies Theorems  \ref{thmdim3} and \ref{thm:vol}. 
Indeed, 
the largest singular value gives the norm of the map and smallest one gives the co-norm, so one has \( s_d^k(x)=|D\phi^k_x|_E|, s_1^{-k}=|D\phi^{-k}_x|_E| , r_1^{k} = m(D\phi^k_x|_F), r^{-k}_{\ell} = m(D\phi^{-k}_x|_F) \).
So if \( dim (E)=2 \) we have 
 \( s_d^k(x)s_{d-1}^k(x)=\det (D\phi^k_x|_E) \) and
 \( s_2^{-k}(x)s_{1}^{-k}(x)=\det D\phi^{-k}_x|_E \), and if \( dim (F)=1 \) then we have \( r_{1}^{k}(x)=m(D\phi^k_x|_F) = det (D\phi^k_x|_F) \) and
 \( r_{\ell}^{-k}(x)=m(D\phi^{-k}_x|_F) = det (D\phi^{-k}_x|_F) \).
 
Theorem \ref{thm-main} generalizes existing results in \cite{WB, HHU, AH}. Indeed, integrabiilty is proved in \cite{WB} assuming that the decomposition \(  E\oplus F  \) is \(  C^{2}  \), dynamically dominated and 2-partially hyperbolic, i.e. satisfies \( m(D\phi_x|_{F}) > |D\phi_x|_{E}|^2\) and follows from the arguments in \cite{HHU} for \(  C^{1}  \) decompositions and in \cite{Par} for Lipschitz decompositions with the same dynamical assumptions. Notice that even in the 3-dimensional setting, the volume domination conditions assumed in Theorem \ref{thmdim3} are strictly weaker than the 2-partially hyperbolic condition. 
Integrability is also proved in 
 \cite{AH}  under the assumption
that there exist constants $a,b,c,d>0$ with \( [a^2,b^2] \cap [c,d] = \emptyset  \) such that \(
a|v|< |D\phi_x v| < b|v| \) for all \( v \in E(x)\) and
\(
c|v|< |D\phi_x v| < d|v|\) for all \( v \in F(x)\), which are also more restrictive that \(  (\star)  \). 
Indeed,   we have  either $b^2 < c$ or $a^2 > d$.  The first case implies that
$ ({b^2}/{c})^k$ goes to zero exponentially fast and so, letting  $x^\ell = \phi^\ell(x)$ we have
$ ({b^2}/{c})^k \geq  {(s^1_n(x^{k-1})...s^1_n(x))^2}/({r^1_1(x^{k-1} )...r^1_1(x)}) \geq  {(s^k_n)^2(x)}/{r^k_1(x)} \geq  {s^k_n(x)s^k_{n-1}(x)}/{r^k_1(x)} $
and  hence the first condition in \eqref{star} is satisfied. In exactly the same way if $a^2 > d$ holds then the second condition in \eqref{star} is satisfied.

%

\subsection{Lyapunov regularity}

The domination condition \(  (\star)  \) can be adapted, in a quite interesting way, under the assumption that the set \(  \mathcal A  \) consists of \emph{regular} points\footnote{A point $x\in M$ is said to be regular for the map $\varphi$ when there is a   splitting of the tangent space $T_xM = E_1(x) \oplus E_2(x) \oplus ... \oplus E_s(x)$ invariant with respect to $D\varphi$ such that  certain conditions are satisfied such as the existence of a specific asymptotic exponential growth rate (Lyapunov exponent) is well defined, see \cite{BarPes} for precise definitions. 
The multiplicative ergodic Theorem of Oseledets \cite{} says that the set of regular points has \emph{full probability} with respect to \emph{any} invariant probability measure.  
In particular, the assumptions of the Theorem are satisfied if every open set has positive measure for some invariant probability measure, in particular this holds if \( \varphi \) is volume preserving or has an invariant probability measure which is equivalent to the volume. 
It is easy to see that the invariant subbundles $E$ and $F$ can be separately split on the orbit of $x$ using these subbundles, that is $E(x^k) = E_{i_1}(x^k) \oplus ... \oplus E_{i_d}(x^k)$, $F(x^k) = E_{j_1}(x^k) \oplus ... \oplus E_{j_{\ell}}(x^k)$ where the indice sets $\{i_1,...,i_{d}\}$ and $\{j_1,...,j_{\ell}\}$ do not intersect.  
}.

\begin{thm}\label{thm:volpres}
Let \( M \) be a compact Riemannian manifold of  \( dim(M)\geq 3 \), \( \varphi: M \to M \)  a \( C^2 \) diffeomorphism and \( TM=E\oplus F
\)  a
 \( D\phi \)-invariant
\( C^1 \) tangent bundle splitting
with \( dim(E)=d \geq 2 \) and \(dim(F)=\ell \geq 1\).   Suppose that there is a dense subset \(  \mathcal A \subset M \) of regular points
such that for each \(  x\in \mathcal A  \) and each set of indices \( 1\leq i, j\leq d, 1\leq m\leq \ell \) there exists a constant \( \lambda >0 \) such that 
\begin{equation*}\tag{\ensuremath{\star\star}}\label{starstar}
\frac{s^k_i(x)s^k_j(x)}{r^k_m(x)} \leq e^{-\lambda k}  \quad \forall \ k\geq 1 
\quad \text{or} \quad \quad \frac{s^{-k}_i(x)s^{-k}_j(x)}{r^{-k}_m(x)} \leq e^{-\lambda k} 
\quad \forall \ k\geq 1.
\end{equation*}
Then $E$ is uniquely integrable.
\end{thm}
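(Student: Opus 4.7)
The plan is to reduce Theorem \ref{thm:volpres} to the same Lie bracket vanishing used in Theorem \ref{thm-main}, applied via the Oseledets decomposition at each regular point. As in Theorem \ref{thm-main}, unique integrability of $E$ follows from the pointwise condition $[X,Y]_F(x) = 0$ for all $C^1$ local sections $X, Y$ of $E$ and all $x$ in a dense set, together with a local uniqueness argument. So it suffices to fix $x \in \mathcal A$ regular, choose a $C^1$ local frame $e_1, \ldots, e_d$ of $E$ near $x$ whose values at $x$ align with the Oseledets splitting $E(x) = \bigoplus_s E_s(x)$, and show that $[e_s, e_t]_F(x) = 0$ for every $s, t$.

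By Oseledets's theorem the invariant bundle $F$ also splits along the orbit of $x$ as $F(\phi^k x) = \bigoplus_m F_m(\phi^k x)$ with Lyapunov exponents $\eta_m$, and one has $s^k_i(x) = e^{k\chi_i + o(k)}$, $r^k_m(x) = e^{k\eta_m + o(k)}$ (with analogous asymptotics for the backward iterates). Condition $(\star\star)$ then translates into the \emph{non-resonance} condition
\[
\chi_i + \chi_j \neq \eta_m \qquad \text{for every triple } (i, j, m),
\]
with a uniform exponential gap in either the forward direction $\chi_i + \chi_j < \eta_m$ or the backward direction $\chi_i + \chi_j > \eta_m$.

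The central step is to bound, block-by-block, the $F_m(x)$-component of $[e_s, e_t](x)$ by
\[
C \cdot \min\left( \frac{s^k_s(x)\, s^k_t(x)}{r^k_m(x)}, \ \frac{s^{-k}_s(x)\, s^{-k}_t(x)}{r^{-k}_m(x)} \right)
\]
for every $k \geq 1$, where $C$ depends only on $C^1$ norms of $e_s, e_t$. This reproduces the pushforward/pullback computation from Theorem \ref{thm-main}: pushing forward by $\phi^k$ amplifies the Oseledets components of $e_s(x), e_t(x)$ by $s^k_s, s^k_t$; the $C^1$ terms in the Lie bracket of the pushforwards contribute a controlled polynomial factor from $D\phi^k$; and pulling the $F_m$-component back by $D\phi^{-k}|_{F_m}$ contracts by $1/r^k_m$. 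The backward estimate is symmetric. By $(\star\star)$ one of the two ratios is bounded by $e^{-\lambda k}$ for some $\lambda > 0$ and all $k \geq 1$, so letting $k \to \infty$ forces each block contribution, and hence $[e_s, e_t]_F(x)$, to vanish.

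The main obstacle is making this Oseledets-adapted estimate rigorous, since Oseledets subbundles are only measurable in general and one cannot extend $e_s$ to a $C^1$ section of $E_s$. The point is that the required bound uses only fiber-level data along the single orbit of $x$: the action of $D\phi^{\pm k}$ on the Oseledets subspaces at the iterates $\phi^k(x)$, together with $C^1$ norms of the frame $e_s, e_t$ as sections of $E$. No smoothness of any Oseledets sub-bundle is needed, so the pushforward/pullback argument can be organized entirely around the Oseledets splitting along the orbit of $x$.
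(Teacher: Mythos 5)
Your route is genuinely different from the paper's. The paper does not re-run the bracket argument for Theorem \ref{thm:volpres} at all: it proves Proposition \ref{prop-lyasing} (via the Courant--Fischer characterization of singular values together with the Oseledets asymptotics), which identifies the exponential growth rates of $s^k_i$ and $r^k_m$ with the Lyapunov exponents of $E$ and $F$, concludes that \((\star\star)\) is equivalent to the non-resonance condition $\mu_i+\mu_j\neq\lambda_m$, and then simply invokes \cite[Theorem 1.2]{AH}, which has exactly that hypothesis and the same conclusion. What you propose is instead a direct proof, adapting the machinery of Section \ref{section2} to an Oseledets-adapted frame at a regular point; this is a legitimate alternative (it is in substance Hammerlindl's own argument) and would make the paper self-contained, at the price of redoing work the paper deliberately outsources to \cite{AH}.

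As written, however, the justification of your central estimate contains a real error and a real gap. You assert that ``the $C^1$ terms in the Lie bracket of the pushforwards contribute a controlled polynomial factor from $D\phi^k$.'' They do not: computing $[\phi^k_*e_s,\phi^k_*e_t]$ naively produces terms involving derivatives of $D\phi^k$, which grow at an exponential rate governed by $\sup\|D\phi\|$ and $\sup\|D^2\phi\|$ and would swamp any ratio of singular values. The point of Lemmas \ref{cartanlem1} and \ref{cartanlem2} is that these terms vanish \emph{identically}: by naturality $D\phi^k[e_s,e_t]=[\phi^k_*e_s,\phi^k_*e_t]$, and testing against a fixed $C^1$ one-form $\eta$ annihilating $E$ near an accumulation point of the orbit, Cartan's formula leaves only $|d\eta|\,|D\phi^k e_s|\,|D\phi^k e_t|$ because $\eta(\phi^k_*e_s)\equiv\eta(\phi^k_*e_t)\equiv 0$. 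Second, the ``block-by-block'' bound cannot be obtained by testing against a one-form dual to $F_m$, since $F_m$ is only measurable; the factor $r^k_m$ must enter through the \emph{lower} bound. Writing $[e_s,e_t]_x=\sum_m w_m$ with $w_m\in F_m(x)$ (after normalizing the frame via Lemma \ref{lem:frob} so that $[e_s,e_t]_x\in F_x$), one has $|D\phi^k[e_s,e_t]_x|\geq C(k)^{-1}|D\phi^k w_m|\geq C(k)^{-1}e^{k(\nu_m-\epsilon)}|w_m|$, where $\nu_m$ is the Lyapunov exponent of $F_m$ and $C(k)$ is subexponential because the angles between Oseledets blocks along the orbit of a regular point decay subexponentially; this is exactly where regularity is used beyond the mere existence of exponents, and it is missing from your sketch. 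With these two corrections the forward estimate yields $w_m=0$ whenever $\chi_s+\chi_t<\nu_m$ and the backward one whenever $\chi_s+\chi_t>\nu_m$, covering all cases of \((\star\star)\) and giving involutivity at $x$, after which density of $\mathcal A$ and Frobenius conclude as in Theorem \ref{thm-main}.
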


We make a few remarks comparing conditions \(  (\star)  \) and \(  (\star\star)  \) . Notice first of all that neither of the two conditions  \(  (\star)  \) and \(  (\star\star)  \) is strictly stronger or weaker that the other. Indeed, notice first of all 
 the decay of the ratios is required to be exponential in condition \(  (\star\star)  \) which makes it a stronger requirement than the slow decay allowed by~\(  (\star)  \). On the other hand, a perhaps more interesting observation is that both conditions include an estimate which depends on the forward iterates of the map and an estimate which depends on the backward iterates of the map, but in  condition \(  (\star\star)  \) the choice of which of these two estimates to satisfy is  allowed to depend on the choice of indices. More precisely, notice that 
\begin{equation}\label{condition1}
 \frac{s^k_{d-1}(x)s^k_d(x)}{r^k_1(x)} \leq e^{-\lambda k} 
\qquad
\Longrightarrow
\qquad
\frac{s^k_i(x)s^k_j(x)}{r^k_m(x)}\leq e^{-\lambda k} 
\end{equation}
for \emph{every} set of indices \( 1\leq i, j\leq d, 1\leq m\leq \ell \), 
and similarly
 \begin{equation}\label{condition2}
\frac{s^{-k}_{1}(x)s^{-k}_2(x)}{r^{-l}_n(x)} \leq e^{-\lambda k} 
\qquad \Longrightarrow
\qquad
\frac{s^{-k}_i(x)s^{-k}_j(x)}{r^{-k}_m(x)} \leq e^{-\lambda k} 
\end{equation}
for \emph{every} set of indices \( 1\leq i, j\leq d, 1\leq m\leq \ell \). 
Thus, (an exponential version of) condition \( (\star) \) requires that  one of the equations \ref{condition1} and \ref{condition2}  be satisfied and therefore forces \(  (\star\star)  \) to hold either always in forward time for all choices of indices or always in backward time for all choices of indices. The crucial point of condition \(  (\star\star)  \) is to weaken this requirement and to allow either the backward time condition or the forward time condition to be satisfied depending on the choice of indices (in fact we will see in Section \ref{secvolpres} that under the assumption of  Lyapunov regularity these two choices are mutually  exclusive).
In particular this means that the sub-bundles \(  E  \) and \(  F  \) do not satisfy an overall domination condition but rather some sort of ``non-resonance'' conditions related to the further intrinsic Oseledets splittings of \(  E  \) and \(  F  \).

 Theorem \ref{thm:volpres} is actually just a restatement of Theorem 1.2 of Hammerlindl in \cite{AH}, whose main motivation was indeed to obtain integrability without any global domination condition. Hammerlindl formulated his result for volume preserving diffeomorphisms (but he actually only uses the density of regular points) and in terms of Lyapunov exponents, assuming that for all pairs of Lyapunov exponents $\mu_1, \mu_2$ of $D\phi|_{E}$ and $\lambda$ of $D\phi|_{F}$, we have \(  \mu_1 + \mu_2 \neq  \lambda\). In Section \ref{secvolpres} we show that these assumptions are equivalent to \(  (\star\star)  \) and thus essentialy reduce Theorem \ref{thm:volpres} to Theorem 1.2 of \cite{AH}.

\subsection{Strategy of the proof}

The core of the  paper is   the proof of Theorem \ref{thm-main} in section \ref{section2}. 
We will use a classical result of Frobenius \cite{AgrSac, Lee} which gives necessary and sufficient conditions for integrability of \( C^1 \) distributions in terms of Lie brackets of certain vector fields spanning the distribution\footnote{There exist also some partial generalisations of the Frobenius Theorem to distributions with less regularity, see for example, \cite{Simic, Ram}}. More precisely we recall that a distribution is \emph{involutive} at a point \( x \) if any two \( C^1 \) vector fields \( X, Y \in E\) defined in a neighbourhood of \( x \) satisfy \( [X,Y]_x\in E_{x} \) where \( [X,Y]_x \) denotes the Lie bracket of \( X \) and \( Y \) at \( x \). Frobenius' Theorem says that a \( C^1 \) distribution is locally uniquely integrable at \( x \) if and only if it is involutive at \( x \). Moreover it follows from the definition that non-involutivity is an open condition and thus it is sufficient to check involutivity on an a dense subset of \( M \), in particular the subset \( \mathcal A \),  to 
imply unique integrability of \( E \). 
We fix once and for all a point
\[
 x_0\in \mathcal A \]
 satisfying condition \( (\star) \),  and
let \( W, Z \) be two arbitrary \( C^1 \) vector fields in \( E \) defined in a neighbourhood of \( x_0 \). Letting \( \Pi: TM\to F \) denote the projection onto \( F \) along \( E \) we will prove that
\begin{equation}\label{zero}
 |\Pi[W, Z]_{x_0}|=0,
\end{equation}
i.e. the component of \( [W, Z]_{x_0} \) in the direction of \( F_{x_0} \) is zero, and so
 \( [W, Z]_{x_0}\in E_{x_0} \). This implies involutivity, and thus unique integrability,
 of \( E \). In the next section we will define fairly explicitly two families of local frames for \( E \) (recall that a local frame for $E$ at $x$ is a set of linearly independent vector fields which span $E_z$ for all $z$ in some neighbourhood of $x$) and show that the norm \(  |\Pi[W, Z]_{x_0}| \) is bounded above by the Lie brackets of the basis vectors of these local frames. We then show that the Lie brackets of these local frames are themselves bounded above by terms of the form  \( (\star) \) thus implying that they go to zero at least for some subsequence and thus proving
 \eqref{zero}.

Theorem \ref{thm:volpres} follows from the result \cite[Theorem 1.2]{AH} and  
Proposition \ref{prop-lyasing} which says that  the rates of growth of the singular values
  are exactly the Lyapunov exponents.

\subsection{Further questions}
Condition \((\star\star)\) is more restrictive than \( (\star) \) in the sense that it requires the decay to be exponential, but is less restrictive than \( (\star) \)  in the sense that there is no overall second order domination since it is sufficient that  for each choice of indices $i,j,m$ one of the two conditions in 
 \((\star\star)\)  be satisfied, and which one is satisfied may depend on the choice of indices. In this sense the domination behaviour of $E$ and $F$ are interlaced, that is there might exist some invariant subspaces of $F(x)$ that dominate (in a second order sense) some invariant subspaces of $E(x)$ and vice versa. With this perspective the comparison becomes more clear. An overall domination allows one to remove the condition of Lyapunov regularity and weaken exponential convergence to $0$ to convergence at any rate. While if one wants to remove over all second order domination then, within the available results and techniques, one needs to impose Lyapunov regularity
 and exponential convergence to $0$. This brings about several plausible questions:

\begin{question}
Can condition \(  (\star)  \) be replaced by condition \(  (\star\star)  \) (in the sense of removing overall domination) with an assumption
which is weaker than Lyapunov regularity?
\end{question}

\begin{question}
Can the exponential convergence condition  in \(  (\star\star)  \) be replaced by a slower or general convergence to $0$ under additional assumptions therefore generalizing theorem 1.2 \cite{AH}
\end{question}

\subsection{Acknowledgements}
We would like to thank Ra\'ul Ures for many useful discussions which have benefited the paper. 

\section{Involutivity}\label{section2}

In this Section we prove Theorem \ref{thm-main}.

\subsection{Orienting the brackets}
The idea which allows us to improve on existing results and work with a condition as weak as \( (\star) \),
depends in a crucial way on the choice of the sequence of local frames which we use to
bound \( |\Pi[W, Z]_{x_0}| \). We will rely on a relatively standard general construction
which is essentially the core of the proof of Frobenius's Theorem, see \cite{Lee}, and which is stated and proved in  \cite{AH} exactly in the form which we need here, we therefore omit the proof.

\begin{lem}\label{lem:frob}
Let $E \subset TM$ be a $C^1$ \( d \)-dimensional distribution defined in a neighbourhood of \( x_0 \).  Let $e_1,..,e_n$ be any choice of basis for $E_{x_0}$, and \( F_{x_0} \) a subspace complementary to \( E_{x_0} \). Then there exists a \( C^1 \) local  frame $\{E_i\}_{i=1}^d$ for $E$ around $x_0$ s.t $E_i(x_0) = e_i$ and \( [E_i, E_j]_{x_0}\in F_{x_0} \) for any \( 1\leq i, j \leq d \).
\end{lem}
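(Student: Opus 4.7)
The plan is to work in a chart adapted to the splitting $E_{x_0}\oplus F_{x_0}$ and then use the fact that coordinate vector fields commute, so that the Lie brackets we care about reduce to a one-line calculation. First I would pick local coordinates $(x_1,\ldots,x_n)$ centered at $x_0$ chosen so that
\[
\operatorname{span}\bigl(\partial/\partial x_1,\ldots,\partial/\partial x_d\bigr)\big|_{x_0}=E_{x_0},\qquad \operatorname{span}\bigl(\partial/\partial x_{d+1},\ldots,\partial/\partial x_n\bigr)\big|_{x_0}=F_{x_0}.
\]
Because $E$ is $C^1$ and transversality to $\operatorname{span}(\partial/\partial x_{d+1},\ldots,\partial/\partial x_n)$ is an open condition, on some neighborhood $U$ of $x_0$ the restriction of the projection onto $\operatorname{span}(\partial/\partial x_1,\ldots,\partial/\partial x_d)$ to $E_x$ remains a linear isomorphism. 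This yields a distinguished $C^1$ local frame of $E$ written as graphs,
\[
\tilde E_k(x)=\frac{\partial}{\partial x_k}+\sum_{j=d+1}^{n}a_{kj}(x)\frac{\partial}{\partial x_j},\qquad k=1,\ldots,d,
\]
with $C^1$ coefficients $a_{kj}$ satisfying $a_{kj}(x_0)=0$.

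Next I would compute $[\tilde E_k,\tilde E_l]_{x_0}$ directly from this expression. Using bilinearity of the bracket together with $[\partial/\partial x_p,\partial/\partial x_q]=0$, the expansion contains only two types of terms: linear terms of the form $(\partial_k a_{lj}-\partial_l a_{kj})\,\partial/\partial x_j$, and quadratic terms of the form $a_{kj}(\partial_j a_{lj'})\,\partial/\partial x_{j'}-a_{lj'}(\partial_{j'}a_{kj})\,\partial/\partial x_j$. At the point $x_0$ the quadratic terms vanish because $a_{kj}(x_0)=0$, and the surviving linear terms are supported on indices $j\in\{d+1,\ldots,n\}$. Therefore $[\tilde E_k,\tilde E_l]_{x_0}\in F_{x_0}$ for every $k,l$.

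Finally I would match the prescribed basis. Writing $e_i=\sum_{k=1}^{d}c_{ik}\,\partial/\partial x_k|_{x_0}$ for constants $c_{ik}$ forming an invertible $d\times d$ matrix, I would set $E_i(x)=\sum_{k=1}^{d}c_{ik}\,\tilde E_k(x)$. Then $E_i$ is $C^1$, $E_1(x),\ldots,E_d(x)$ is a frame for $E_x$ on $U$, and $E_i(x_0)=\sum_k c_{ik}\,\partial/\partial x_k|_{x_0}=e_i$ because $a_{kj}(x_0)=0$. Crucially, since the $c_{ik}$ are constants the bracket is bilinear over them, so
\[
[E_i,E_j]=\sum_{k,l}c_{ik}c_{jl}\,[\tilde E_k,\tilde E_l],
\]
and evaluating at $x_0$ gives an element of $F_{x_0}$ by the previous step, as required.

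I do not expect a serious obstacle: the whole argument reduces to the standard observation that the bracket of two graph-form vector fields lies in the vertical direction at a point where the graph coefficients vanish. The only points requiring a bit of care are the openness argument that produces the graph frame $\tilde E_k$ on a full neighborhood of $x_0$ (so that the $a_{kj}$ are genuine $C^1$ functions and the brackets make sense), and the insistence that the $c_{ik}$ be \emph{constants}, since allowing them to depend on $x$ would introduce terms involving $dc_{ik}$ that need not lie in $F_{x_0}$ and would spoil the conclusion.
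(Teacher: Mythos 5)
Your proof is correct and is the standard graph-coordinate argument; the paper itself omits the proof of this lemma, deferring to \cite{AH}, where essentially the same construction (an adapted chart, a graph-form frame over the commuting coordinate fields, and a constant-coefficient change of basis to match $e_1,\ldots,e_d$) is used. One minor observation: since every term in your expansion of $[\tilde E_k,\tilde E_l]$ is already a multiple of some $\partial/\partial x_j$ with $j>d$, the bracket is ``vertical'' at every point of the chart, so the vanishing $a_{kj}(x_0)=0$ is needed only to guarantee $E_i(x_0)=e_i$, not for the bracket conclusion.
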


 In what follows
 we evaluate all objects and quantities at \( x_0 \) so the reader should keep this in mind when we omit this index, when there is no risk of confusion.
 For each \( k\geq 1 \) we
let  $v^{(k)}_1,.., v^{(k)}_d$  be an orthonormal choice of eigenvectors  at
$E_{x_0}$ which span the
eigenspaces of $((D\phi^k)|_{E})^{\dag}(D\phi^k)|_{E}$ and which satisfy
$|(D\phi^k)|_{E}v^k_i| = s^k_i$,
where $s^k_1 \leq s^k_2 \leq ..$ are the singular values of the map $D\phi^k|_{E}$.
We then let \( \{Y_i^{(k)}\} \)  be the \( C^1 \) local frame given by Lemma \ref{lem:frob}, i.e.  such that
\[
 Y^k_i(x_0) = v^k_i \quad \text{ and } \quad
  [Y^k_i,Y^k_j]_{x_0} \in F_{x_0}.
\]
Now for each \( k\geq 1 \), let \( (i(k), j(k)) \) denote a (not necessarily unique) pair of indices that maximise the norm of the bracket,  i.e.
\[
| [Y^{k}_{i(k)}, Y^{k}_{j(k)}]_{x_0}|\geq  | [Y^{k}_\ell, Y^{k}_m]_{x_0} |
\]
for all \(1\leq \ell, m \leq d \). The proof for the first condition in \eqref{star} will use these vector fields.  The proof of the second condition is exactly the same by considering $D\phi^{-k}$ and using vector fields \( \{X_i^{(k)}\} \) which satisfy the similar conditions as above, that is \(X^k_i(x_0) = w^k_i \) and \([X^k_i,X^k_j]_{x_0} \in F_{x_0}\) where $w^k_i$ are a choice of orthonormal eigenvectors associated to the singular values $s^{-k}_1 \geq s^{-k}_2 \geq ..$

\subsection{A priori bounds}

The following Lemma gives some  upper
bounds on \( |\Pi[Z,W]_{x_0}| \) in terms of the Lie brackets
\( | [Y^k_{i(k)}, Y^k_{j(k)}]_{x_0}| \). These bounds are 
\emph{a priori} in the sense that they do not  depend
on the specific form of the local frame \( \{Y^{k}_i\} \), but simply on the fact that they are orthonormal. The statement we give here is thus  a special case of a somewhat more general setting.

\begin{lem}\label{lem-orthlie} For every \( k\geq 1 \) we have
$$|\Pi[Z,W]_{x_0}| \leq d (d-1) |[Y^{k}_{i(k)},Y^{k}_{j(k)}]_{x_0}|$$
\end{lem}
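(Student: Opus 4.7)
The plan is to expand both $W$ and $Z$ in the local frame $\{Y^k_i\}_{i=1}^d$ and then exploit the Leibniz rule for the Lie bracket together with the special property of this frame given by Lemma \ref{lem:frob}. Since $\{Y^k_i\}$ is a $C^1$ local frame for $E$ around $x_0$, I can write, on a neighbourhood of $x_0$,
$$W = \sum_{i=1}^d a_i\, Y^k_i, \qquad Z = \sum_{j=1}^d b_j\, Y^k_j,$$
for $C^1$ coefficient functions $a_i, b_j$. Because $\{Y^k_i(x_0)\}$ is orthonormal in $E_{x_0}$, the values $a_i(x_0), b_j(x_0)$ are inner products of $W(x_0), Z(x_0)$ with the corresponding basis vectors, and so (after the implicit normalisation $|W(x_0)|, |Z(x_0)|\leq 1$ absorbed into the statement) satisfy $|a_i(x_0)|, |b_j(x_0)| \leq 1$.

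Next, by the derivation property of the Lie bracket,
$$[W,Z] = \sum_{i,j} \bigl(a_i b_j\, [Y^k_i, Y^k_j] + a_i (Y^k_i b_j)\, Y^k_j - b_j (Y^k_j a_i)\, Y^k_i\bigr).$$
The second and third families of terms are pointwise sections of $E$, since each $Y^k_i, Y^k_j$ is, hence they are annihilated by the projection $\Pi$ onto $F$ along $E$. The $i=j$ terms in the first family vanish because $[Y^k_i,Y^k_i]=0$, and for $i\neq j$ we invoke Lemma \ref{lem:frob} to conclude that $[Y^k_i, Y^k_j]_{x_0}\in F_{x_0}$, so that $\Pi$ acts as the identity on these vectors at $x_0$. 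Putting this together,
$$\Pi[W,Z]_{x_0} = \sum_{i\neq j} a_i(x_0)\, b_j(x_0)\, [Y^k_i, Y^k_j]_{x_0}.$$

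The final step is a triangle inequality followed by the maximising property of $(i(k),j(k))$: since $|a_i(x_0)|,|b_j(x_0)|\leq 1$ and there are exactly $d(d-1)$ ordered pairs $(i,j)$ with $i\neq j$,
$$|\Pi[W,Z]_{x_0}| \;\leq\; \sum_{i\neq j} |a_i(x_0)|\,|b_j(x_0)|\,|[Y^k_i, Y^k_j]_{x_0}| \;\leq\; d(d-1)\,|[Y^k_{i(k)}, Y^k_{j(k)}]_{x_0}|,$$
which is the desired bound. I do not anticipate a genuine obstacle here: the whole argument is module-algebraic. The only care required is to notice that the Leibniz cross-terms genuinely lie in $E$ as vector fields (and not merely at the point $x_0$), which is exactly what allows $\Pi$ to annihilate them — and this in turn is guaranteed by $\{Y^k_i\}$ being a local frame for $E$, not just a basis at $x_0$.
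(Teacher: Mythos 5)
Your proof is correct and follows essentially the same route as the paper's: expand $W,Z$ in the frame $\{Y^k_i\}$, use bilinearity/Leibniz for the bracket, kill the derivative terms with $\Pi$ since they are sections of $E$, and bound the remaining sum by the maximising pair. Your side remark about needing $|W(x_0)|,|Z(x_0)|\leq 1$ for the coefficient bound is a fair observation (the paper leaves this implicit), but it is harmless since the lemma is only used to conclude that $|\Pi[Z,W]_{x_0}|$ vanishes.
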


\begin{proof}

Write
\[
Z = \sum_{\ell=1}^{d}\alpha^{(k)}_\ell Y^{k}_\ell
\quad \text{ and } \quad
W = \sum_{m=1}^{d}\alpha^{(k)}_m Y^{k}_m
\]
 for some
functions
$\alpha^{(k)}_l, \alpha^{(k)}_l$ which, by orthogonality of the frames at $x_0$, satisfy
$|\alpha^{(k)}_m(x_0)|,|\alpha^{(k)}_m(x_0)|\leq1$.
We have by bilinearity of $[\cdot,\cdot]$:
 \[
 [Z, W] =
\sum_{l,m=1}^{d}\alpha^{(k)}_\ell\alpha^{(k)}_m [Y^{k}_\ell,Y^{k}_m] + \alpha^{(k)}_\ell  Y^{k}_\ell
(\alpha^{(k)}_m) Y^{k}_m - \alpha^{(k)}_m Y^{k}_m (\alpha^{(k)}_\ell )Y^{k}_\ell
 \]
Applying the projection \( \Pi \) to both sides and using the fact that
 $\Pi(Y^{k}_\ell)=\Pi(Y^{k}_m)=0$, $[Y^{k}_\ell,Y^{k}_m]_{x_0} \in F(x_0)$, and taking norms,  we have
\[
|\Pi[Z,W]_{x_0}| \leq
\sum_{\ell,m=1}^{d}|\alpha^{(k)}_\ell\alpha^{(k)}_m(x_0)  |[Y^{k}_\ell,Y^{k}_m]_{x_0}|
\]
This clearly implies the statement.
\end{proof}

\subsection{Dynamical bounds}
By Lemma \ref{lem-orthlie} it is sufficient to find a subsequence \( k_m\to \infty \) such that
\[
|[Y^{k_m}_{i(k_m)},Y^{k_m}_{j(k_m)}]_{x_0}| \to 0
\]
as \( m\to \infty \),
as this would imply \eqref{zero} and thus our result.
This is the key step in the argument. Notice first that by
  \( (\star) \), there exists a subsequence $k_m\to\infty$ such that
\begin{equation}
\label{eq:subseq}
\frac{s^{k_m}_d(x_0) s^{k_m}_{d-1}(x_0)}{m(D\phi^{k_m}|_{F_{x_0}})} \to 0
\end{equation}
as \( m\to \infty \).
Our result then follows immediately from the following estimate which, together with Lemma \ref{lem-orthlie} and \eqref{eq:subseq} implies that \( |\Pi [Z, W]_{x_0}|=0 \). This is of course where we use in a crucial way the specific choice of local frames.

\begin{lem}\label{lem-upperbound}
There is a constant $K>0$ s.t up to passing to a subsequence of $k_m$ one has

$$| [Y^{k_{m}}_{i(k_{m)}},Y^{k_{m}}_{j(k_{m)}}]_{x_0}| \leq K \frac{s^{k_{m}}_{d}(x_0) s^{k_{m}}_{d-1}(x_0)}
{m(D\phi^{k_{m}}|_{F_{x_0}})}$$
\end{lem}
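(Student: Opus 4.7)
The plan is to transport the bracket $[Y^{k}_{i(k)},Y^{k}_{j(k)}]_{x_0}$ to the iterate $\phi^k(x_0)$ via the pushforward, exploiting the large expansion of $D\phi^k$ on $E$ against the co-norm of $D\phi^k$ on $F$. Set $\tilde Y^{k}_i := \phi^k_\ast Y^k_i$, a $C^1$ vector field tangent to $E$ in a neighbourhood of $\phi^k(x_0)$ by invariance of $E$. Naturality of the Lie bracket gives
\[
[\tilde Y^{k}_i,\tilde Y^{k}_j]_{\phi^k(x_0)} \;=\; D\phi^k_{x_0}\bigl([Y^{k}_i,Y^{k}_j]_{x_0}\bigr),
\]
and since $[Y^{k}_i,Y^{k}_j]_{x_0}\in F_{x_0}$ by Lemma~\ref{lem:frob} and $F$ is $D\phi$-invariant, the right-hand side lies in $F_{\phi^k(x_0)}$. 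Inverting $D\phi^k$ on $F$ yields the reduction
\[
|[Y^{k}_i,Y^{k}_j]_{x_0}| \;\le\; \frac{|[\tilde Y^{k}_i,\tilde Y^{k}_j]_{\phi^k(x_0)}|}{m(D\phi^{k}|_{F_{x_0}})},
\]
so it suffices to show $|[\tilde Y^{k}_{i(k)},\tilde Y^{k}_{j(k)}]_{\phi^k(x_0)}| \le K\, s^{k}_{d}(x_0)\,s^{k}_{d-1}(x_0)$.

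For the numerator I would choose, using $C^1$-regularity of $E$ and compactness of $M$, a $C^1$ local frame $\{\hat Y^{k}_\ell\}_{\ell=1}^{d}$ for $E$ near $\phi^k(x_0)$ which at the base point agrees with the orthonormal system $\hat v^{k}_\ell := D\phi^k(v^{k}_\ell)/s^{k}_\ell$ and whose $C^1$-norm is bounded by a universal constant; passing to a sub-subsequence along which $\phi^{k_m}(x_0)\to y^\ast$ and $\hat v^{k_m}_\ell\to \hat v^\infty_\ell$ lets one compare all such frames to a single fixed frame at $y^\ast$, which is the sole role of the extra subsequence in the statement. Expand $\tilde Y^{k}_i = \sum_\ell \beta^{k}_{i\ell}\,\hat Y^{k}_\ell$ and use bilinearity:
\[
[\tilde Y^{k}_i,\tilde Y^{k}_j] \;=\; \sum_{\ell,m}\beta^{k}_{i\ell}\beta^{k}_{jm}[\hat Y^{k}_\ell,\hat Y^{k}_m] \;+\; (\text{terms tangent to }E).
\]
Since $[\tilde Y^{k}_i,\tilde Y^{k}_j]_{\phi^k(x_0)}$ already lies in $F_{\phi^k(x_0)}$ it equals its own $\Pi$-projection, and $\Pi$ kills the $E$-terms; at $\phi^k(x_0)$ the coefficients satisfy $\beta^{k}_{i\ell}(\phi^k(x_0)) = s^{k}_i\,\delta_{i\ell}$, so
\[
[\tilde Y^{k}_i,\tilde Y^{k}_j]_{\phi^k(x_0)} \;=\; s^{k}_i\, s^{k}_j\; \Pi[\hat Y^{k}_i,\hat Y^{k}_j]_{\phi^k(x_0)}.
\]
The uniform $C^1$ bound on the $\hat Y^{k}_\ell$ together with the uniform bound on $\Pi$ (from the uniform transversality of $E$ and $F$) gives $|\Pi[\hat Y^{k}_i,\hat Y^{k}_j]_{\phi^k(x_0)}|\le K'$ for a constant $K'$ independent of $k$.

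Assembling, $|[Y^{k}_i,Y^{k}_j]_{x_0}|\le K'\, s^{k}_i s^{k}_j/m(D\phi^k|_{F_{x_0}})$ for every pair $i,j$. Since the bracket of a vector field with itself vanishes, we may assume $i(k)\neq j(k)$; then the product $s^{k}_{i(k)}s^{k}_{j(k)}$ is at most $s^{k}_{d}(x_0)\,s^{k}_{d-1}(x_0)$, giving the desired inequality with $K := K'$. The main obstacle in carrying this out is the construction of the adapted frame $\{\hat Y^{k}_\ell\}$ with a $k$-independent $C^1$-norm: one cannot simply take the pushforwards $\tilde Y^{k}_\ell$ (their norms grow like $s^{k}_\ell$), so the frames have to be built intrinsically at each base point $\phi^k(x_0)$ from the ambient $C^1$-structure of $E$, and it is precisely this step — passing to a subsequence so that base points and adapted bases converge — that legitimises the universal $C^1$-control and hence the uniform constant $K$.
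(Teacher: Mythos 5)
Your proof is correct, and it shares the paper's two-step skeleton: first bound $|[Y^{k}_{i(k)},Y^{k}_{j(k)}]_{x_0}|$ by $|D\phi^{k}[Y^{k}_{i(k)},Y^{k}_{j(k)}]_{x_0}|/m(D\phi^{k}|_{F_{x_0}})$ using that the bracket lies in the invariant bundle $F$, then bound the pushed-forward bracket by a constant times $s^{k}_{i}s^{k}_{j}\le s^{k}_{d-1}s^{k}_{d}$ using that the pushed-forward frame fields are tangent to $E$. Where you genuinely diverge is in how the second bound is extracted. The paper passes to a subsequence along which $\phi^{k_m}(x_0)\to y$ and the normalized images of the brackets converge to some $\hat Y\in F_y$, fixes one $C^1$ one-form $\eta$ near $y$ annihilating $E$ with $\eta(\hat Y_{k_m})\ge c>0$, and applies Cartan's formula to the pushed-forward fields: the derivative terms vanish because $\eta$ kills $E$, leaving the bilinear estimate $|d\eta|\,s^{k}_{d-1}s^{k}_{d}$. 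You instead expand the pushforwards in an adapted frame $\{\hat Y^{k}_\ell\}$ with prescribed orthonormal values $D\phi^k v^k_\ell/s^k_\ell$ at $\phi^k(x_0)$ and uniformly bounded $C^1$ norm, and let $\Pi$ kill the Leibniz terms, which cleanly extracts the factor $s^k_i s^k_j$. The step you flag as the main obstacle is in fact unproblematic and needs no subsequence: cover $M$ by finitely many charts carrying $C^1$ orthonormal frames of $E$ (Gram--Schmidt a local $C^1$ frame) and rotate by a constant orthogonal matrix to hit the prescribed basis; this yields a $k$-independent $C^1$ bound for every base point and every orthonormal basis, so your estimate actually holds for all $k$, which is stronger than the stated conclusion. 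What the paper's route buys is that only one fixed auxiliary object ($d\eta$ near the limit point) needs to be controlled; what yours buys is the elimination of the limit point $y$, the limit vector $\hat Y$, and the lower bound $\eta(\hat Y_{k_m})>c$, replaced by the uniform boundedness of $\Pi$ coming from the continuity and transversality of $E$ and $F$ on the compact manifold. Both arguments ultimately rest on the same use of the $C^1$ regularity of $E$, which is what makes $d\eta$, respectively the brackets $[\hat Y^{k}_\ell,\hat Y^{k}_m]$, uniformly bounded.
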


\begin{proof}
We divide the proof into two parts. First we explain how to choose the required subsequence \( k_m \to \infty \). Then we show that for such a subsequence we have the upper bound given in the statement.

Let \( k_m\to 0 \) be the subsequence such that \eqref{eq:subseq} holds and
 consider the sequence of images \( \phi^{k_m}(x_0) \) of the point \( x_0 \). By compactness of \( M \) this sequence has a converging subsequence and so, up to taking a further subsequence (which we still denote by $k_m$)  if necessary, we can assume that there exists a point \( y\in M \) such that $\lim_{m\rightarrow \infty}\phi^{k_m}(x)=y$. Fix $m_0$ large enough
s.t for all $m>m_0$, $\phi^{k_m}(x_0)$ lies in a coordinate chart around $y$
and let  $A=
\{\phi^{k_m}(x_0)\}_{m>m_0} \cup y$. Notice that A  is a compact set.
Now for each \( k_m \), let \( (i(k_m), j(k_m)) \) denote the "maximizing" pairs of indices defined above,   and let
\[\hat Y_{k_m} :=
\frac{D\phi^{k_m}_{x_0}[Y^{k_m}_{i(k_m)},Y^{k_m}_{j(k_m)}]_{x_0}}{|D\phi^{k_{m}}[Y^{k_m}_{i(k_m)},Y^{k_m}_{j(k_m)}]_{x_0}|}.
\]
Then the sequence of vectors
$\hat Y_{k _m}$  lies in the compact space $A \times S$, where $S$ is the unit ball in $\mathbb{R}^m$ and therefore there exists a subsequence of $k_m$ (which we still denote as $k_m$) and a
vector \( \hat Y\in T_yM \) such that
\[
\hat Y_{k_{m}} \to \hat Y
\]
as \( m \to \infty \).
Now recall  that by our choice of local frames we have
\[
[Y^{k_{m}}_{i(k_{m})},Y^{k_{m}}_{j(k_{m})}]_{x_0}\in F_{x_0}
\]
and since F is a $D\phi$ invariant and closed subset of $TM$ this implies also
 \[
 \hat Y_{k_{m}}\in F_{\phi^{k_{m}}(x_0)}  \quad \text{ and } \quad
 \hat Y\in F_y.
 \]

%
%

Now let $E^{\perp}$ be the complementary subbundle of $TM$ orthonormal to $E$ and $\vartheta \subset T^*M$ be the subbundle defined by $g(E^{\perp},\cdot)$ which is the subbundle of $T^*M$ that defines $E$ by the orthogonality relation. For any $\eta, $ a section of $\vartheta$, one can write $\eta(\cdot) = g(V,\cdot)$ where $V$ is a section of $E^{\perp}$. Therefore since the bundle $F$ is uniformly bounded away from $E$ there exists a section $\eta$ of $\vartheta$ around $y$ s.t $\eta(\hat Y_{k_m}), \eta(\hat Y)>c$ for all $m$ large enough and for some constant $c>0$. In the following we assume that everything is evaluated at $x_0$, so we generally omit $x_0$ unless needed for clarity.

\begin{lem}
\label{cartanlem1} For every \(  m   \)   large enough we have
\[
  \eta(D\phi^{k_m}[Y^{k_m}_{i(k_m)},Y^{k_m}_{j(k_m)}]) \leq  |d\eta| s_{n-1}^{k_m}s_n^{k_m}.
 \]
\end{lem}
\begin{proof}
By the naturality of the Lie bracket we have
\begin{equation}\label{cartan1}
\eta(D\phi^{k_m}[Y^{k_m}_{i(k_m)},Y^{k_m}_{j(k_m)}])=
\eta([D\phi^{k_m}Y^{k_m}_{i(k_m)},D\phi^{k_m}Y^{k_m}_{j(k_m)}])
\end{equation}
We recall a formula in differential geometry (see \cite[Page 475]{Lee}), for any  two vector fields $Z,W$ and a
1-form $\eta$ we have
$$
\eta([Z,W])=Z(\eta(W)) - Z(\eta(W)) + d\eta(Z,W)
$$
Applying this formula to the right hand side of \eqref{cartan1} and using the fact that \(  \eta  \) is a bilinear form, and that \(  \eta(Y^{k_m}_{i(k_m)}) =
\eta(Y^{k_m}_{j(k_m)})=0  \) by construction, and the choice of the vectors  \(  v^{k_m}_{i(k_m)}, v^{k_m}_{j(k_m)}  \) (see Section \ref{locframes}) we get
\begin{align*}
|\eta([D\phi^{k_m}Y^{k_m}_{i(k_m)},D\phi^{k_m}Y^{k_m}_{j(k_m)}]_{x_0})|
&=
|d{\eta}(D\phi^{k_m}Y^{k_m}_{i(k_m)},D\phi^{k_m}Y^{k_m}_{j(k_m)})_{x_0}|
\\
&\leq
|d{\eta}| |D\phi^{k_m}_{x_0}v^{k_m}_{i(k_m)}|||D\phi^{k_m}_{x_0}v^{k_m}_{j(k_m)}|
 \\
 &\leq
|d{\eta}| s_{n-1}^{k_m}(x_0)s_n^{k_m}(x_0)
\end{align*}
Substituting into \eqref{cartan1} we get the result.
\end{proof}

\begin{lem}
\label{cartanlem2}
  For every \(  m   \)   large enough we have
\[
\eta(D\phi^{k_m}_{x_0}[Y^{k_m}_{i(k_m)},Y^{k_m}_{j(k_m)}]_{x_0}) \geq  |{\eta}(\hat Y_{k_{m}})|
m(D\phi^{k_m}_{x_0}|_F)| [Y^{k_m}_{i(k_m)},Y^{k_m}_{j(k_m)}]_{x_0}|
\]
\end{lem}
\begin{proof}
Notice first that by the definition of \(  \hat Y_{k_{m}}  \) we have
\[
|{\eta}(D\phi^{k_m}[Y^{k_m}_{i(k_m)},Y^{k_m}_{j(k_m)}])| =|D\phi^{k_m}[Y^{k_m}_{i(k_m)},Y^{k_m}_{j(k_m)}]| |{\eta}(\hat Y_{k_{m}})|
\]
Then, using the fact that
 $[Y^{k_m}_{i(k_m)},Y^{k_m}_{j(k_m)}]_{x_0} \in F(x_0)$  we get
$$|D\phi^{k_m}[Y^{k_m}_{i(k_m)},Y^{k_m}_{j(k_m)}]| \geq m(D\phi^{k_m}|_F)|  [Y^{k_m}_{i(k_m)},Y^{k_m}_{j(k_m)}]|
$$
Substituting this bound into the previous inequality we get the result.
\end{proof}

Returning to the proof of Lemma \ref{lem-upperbound}, combining  Lemmas \ref{cartanlem1} and  \ref{cartanlem2} we get
 \[
 |d\eta| s_{n-1}^{k_m}s_n^{k_m} \geq  |{\eta}(\hat Y_{k_{m}})|
m(D\phi^{k_m}|_F)| [Y^{k_m}_{i(k_m)},Y^{k_m}_{j(k_m)}]|.
\]
Using the fact that \(  |d\eta|  \) is bounded and  $|{\eta}(\hat Y_{k_{m}})|>c>0$ for all  \(  m  \) large enough there exists a constant \(  K>0  \) such that
$$|[Y^{k_m}_{i(k_m)},Y^{k_m}_{j(k_m)}]| < K \frac{s^{k_m}_{n}s^{k_m}_{n-1}}{m(D\phi^{k_m}|_{F})}.
$$
This completes the proof.
\end{proof}

\section{Lyapunov Regular Case}
\label{secvolpres}

In this Section we prove Theorem \ref{thm:volpres} by showing that the assumptions of the Theorem are equivalent to the assumptions of \cite[Theorem 1.2]{AH}, which has the same conclusions. To state this equivalence recall that by definition of regular point,  every \( x \in \mathcal A \) admits   
 an Oseledets splitting 
 \(
 E=E_1 \oplus E_2 \oplus ... \oplus E_n
 \)
for some \(  n\leq m  \), with associated Lyapunov exponents
\(
\lambda_1 <\lambda_2 < ... < \lambda_n.
\)
Let \( d_0=0 \),  \( d_n=m \) and, for each \( i=1,.., n-1 \), let
 \(
 d_i=\sum_{j=1}^{i}dim(E_j).
 \)

\begin{prop}\label{prop-lyasing}
For every \( i=1,..., n \) and every  $d_{i-1} < \ell \leq d_i$
we have
\[
\lim_{k\rightarrow \pm \infty}\frac{1}{k}\ln s^k_\ell = \lambda_{\ell}
\]
\end{prop}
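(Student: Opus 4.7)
The plan is to sandwich $\tfrac{1}{k}\ln s^k_\ell$ between $\lambda_i-\varepsilon$ and $\lambda_i+\varepsilon$, where $i$ is the unique index with $d_{i-1}<\ell\leq d_i$ (so $\lambda_\ell$ means $\lambda_i$ in the proposition's abuse of notation), by applying the Courant--Fischer min--max characterization of singular values to trial subspaces built from the Oseledets decomposition. The two inputs are precisely the defining properties of a Lyapunov regular point $x$: (i) for every nonzero $v\in E_j$ one has $\tfrac{1}{k}\ln\|D\varphi^k v\|\to\lambda_j$ (both as $k\to+\infty$ and $k\to-\infty$), and (ii) the angles between the images $D\varphi^k(E_j)$ inside $T_{\varphi^k(x)}M$ decay at most subexponentially in $k$. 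Both are part of the definition in \cite{BarPes}. Since applying the argument to $\varphi^{-1}$ handles $k\to-\infty$, I fix $\varepsilon>0$ and work in forward time.

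For the upper bound I would use
\[
s^k_\ell \;=\; \min_{\dim V=\ell}\;\max_{v\in V,\,\|v\|=1}\|D\varphi^k v\|
\]
with the trial subspace $V=E_1\oplus\cdots\oplus E_{i-1}\oplus V_i$, where $V_i\subset E_i$ is any subspace of dimension $\ell-d_{i-1}$. A unit vector $v\in V$ decomposes uniquely as $v=v_1+\cdots+v_i$ with $v_j\in E_j$, and the continuity of the (fixed) projections along the splitting of $T_xM$ bounds $\|v_j\|\leq C(x)$. Property (i) then gives $\|D\varphi^k v_j\|\leq e^{(\lambda_j+\varepsilon)k}\|v_j\|\leq e^{(\lambda_i+\varepsilon)k}\|v_j\|$ for all large $k$, since $\lambda_j\leq\lambda_i$ when $j\leq i$. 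Summing the $i\leq n$ contributions yields $\|D\varphi^k v\|\leq e^{(\lambda_i+\varepsilon+o(1))k}$, hence $\limsup_k\tfrac{1}{k}\ln s^k_\ell\leq\lambda_i+\varepsilon$.

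For the lower bound I would use the dual form
\[
s^k_\ell \;=\; \max_{\dim V=d-\ell+1}\;\min_{v\in V,\,\|v\|=1}\|D\varphi^k v\|
\]
with $V=V_i\oplus E_{i+1}\oplus\cdots\oplus E_n$, choosing $V_i\subset E_i$ of dimension $d_i-\ell+1$ (so $\dim V=d-\ell+1$). For a unit $v\in V$ written as $v=v_i+v_{i+1}+\cdots+v_n$, at least one summand has $\|v_j\|\geq 1/(n-i+1)$. Property (ii), applied to the iterates of the Oseledets splitting at $\varphi^k(x)$, gives $\|D\varphi^k v\|\geq e^{-o(k)}\max_j\|D\varphi^k v_j\|$, and (i) gives $\|D\varphi^k v_j\|\geq e^{(\lambda_j-\varepsilon)k}\|v_j\|$. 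Since $\lambda_j\geq\lambda_i$ for $j\geq i$, this forces $\|D\varphi^k v\|\geq e^{(\lambda_i-\varepsilon-o(1))k}$ uniformly in $v\in V$ with $\|v\|=1$, and so $\liminf_k\tfrac{1}{k}\ln s^k_\ell\geq\lambda_i-\varepsilon$. Letting $\varepsilon\downarrow 0$ closes the sandwich in forward time.

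The main obstacle, as is typical in Oseledets-type arguments, is the careful quantitative use of the angle bound (ii). Although the angle between $D\varphi^k(E_j)$ and $\bigoplus_{m\neq j}D\varphi^k(E_m)$ may shrink, regularity of $x$ forces this shrinkage to be only subexponential, so that bounds on individual components $\|D\varphi^k v_j\|$ transfer to bounds on $\|D\varphi^k v\|$ up to a harmless $e^{o(k)}$ factor. Extracting this fact cleanly from the definition of regularity is what makes both the upper bound (control of a decomposition of $D\varphi^k v$ at $\varphi^k(x)$) and especially the lower bound (where cancellation between summands would be fatal) go through.
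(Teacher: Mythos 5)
Your proposal is correct and follows essentially the same route as the paper: both apply the Courant--Fischer min--max characterization with the trial subspaces $E_1\oplus\cdots\oplus E_{i-1}\oplus W'$ and $V'\oplus E_{i+1}\oplus\cdots\oplus E_n$ adapted to the Oseledets splitting, and both reduce the estimate to the fact that the norm and co-norm of $D\varphi^k$ restricted to a sum of Oseledets blocks grow at the extreme Lyapunov rates, with the subexponential decay of angles controlling the passage from componentwise bounds to bounds on the full vector. The only difference is organizational: the paper outsources those growth-rate estimates to Lemma \ref{lem-singular} (part of the Multiplicative Ergodic Theorem), while you carry them out inline.
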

Proposition \ref{prop-lyasing} says  that the rates of growth of the singular values are exactly the Lyapunov exponents. This implies that conditions 
 \((\star\star)\) is equivalent to 
\begin{equation}\label{loglim}
\lim_{k\rightarrow \infty} \frac{1}{k}\ln \frac{s^k_i(x)s^k_j(x)}{r^k_m(x)}  \neq 0.
\end{equation}
Letting 
\[
\mu_i(x)=\lim_{k\rightarrow \infty}\frac{1}{k} \ln{s^k_i(x)} \quad \mu_j(x)=\lim_{k\rightarrow \infty}\frac{1}{k} \ln s^k_j(x) \quad \lambda_m(x) = \lim_{k\rightarrow \infty}\frac{1}{k} \ln{r^k_m(x)}
\]
 equation \eqref{loglim} holds true if and only if $\mu_i(x) + \mu_j(x) \neq \lambda_m(x)$ for all indices $i,j,m$ which is exactly the condition given in \cite[Theorem 1.2]{AH}.

The proof of Proposition \ref{prop-lyasing} is based on the following two statements. The first is the so-called
Courant-Fischer MinMax Theorem. 

\begin{thm}[\cite{CH}]
Let $\phi: X \rightarrow Y$ be a linear operator between $n$ dimensional spaces. Let $s_1 \leq s_2 \leq ... \leq s_n$ be its singular values. Then
\begin{equation}\label{courantfischer}
s_\ell = \sup_{dim(V)=n-\ell+1}m(\phi|_V) = \inf_{dim(W)=\ell}|\phi|_V|
\end{equation}
where \( sup/inf \) above are taken over all subspaces $V,W$ of  the given dimensions.
\end{thm}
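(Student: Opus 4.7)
The plan is to reduce the statement to the classical Courant--Fischer characterization of eigenvalues of a self-adjoint operator, applied to $A = \phi^\dag \phi : X \to X$. First I would record that $A$ is self-adjoint and positive semi-definite, that its eigenvalues are exactly $s_1^2 \le s_2^2 \le \cdots \le s_n^2$ by definition of the singular values, and that the spectral theorem supplies an orthonormal eigenbasis $v_1, \ldots, v_n$ of $X$ with $A v_i = s_i^2 v_i$. Since $\langle A w, w\rangle = |\phi(w)|^2$, characterizing Rayleigh quotients of $A$ is the same as characterizing $|\phi(w)|^2/|w|^2$, so the singular-value statement is simply the square root of the eigenvalue version; it therefore suffices to prove each of the two equalities directly by a standard subspace-intersection argument.

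Both equalities are obtained by a symmetric pair of bounds, each consisting of an ``easy'' direction witnessed by an explicit extremal subspace and a ``hard'' direction obtained by a dimension count. For the first equality, the inequality $s_\ell \le \sup_V m(\phi|_V)$ is witnessed by $V_0 = \mathrm{span}(v_\ell, \ldots, v_n)$, which has dimension $n-\ell+1$: any unit vector $v = \sum_{i \ge \ell} c_i v_i \in V_0$ satisfies $|\phi(v)|^2 = \sum_{i \ge \ell} c_i^2 s_i^2 \ge s_\ell^2$. For the reverse inequality, given an arbitrary $V$ with $\dim V = n-\ell+1$, the count $\dim V + \dim \, \mathrm{span}(v_1, \ldots, v_\ell) = n+1 > n$ forces a nonzero vector $v$ in the intersection, on which $|\phi(v)|^2 \le s_\ell^2 |v|^2$, and so $m(\phi|_V) \le s_\ell$.

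The second equality is proved analogously with the roles of the two subspaces swapped: the explicit choice $W_0 = \mathrm{span}(v_1, \ldots, v_\ell)$ shows $\inf_W |\phi|_W| \le s_\ell$, and for any $W$ with $\dim W = \ell$, intersecting with $\mathrm{span}(v_\ell, \ldots, v_n)$ (again via $\ell + (n-\ell+1) > n$) produces a nonzero $w$ with $|\phi(w)| \ge s_\ell |w|$, giving $|\phi|_W| \ge s_\ell$. I do not expect any genuine obstacle here, since the theorem is cited from \cite{CH} and is a standard consequence of the spectral theorem. The only point requiring a moment of care is that the statement orders singular values in increasing order, so when appealing to the classical Courant--Fischer formulas, which are usually phrased for eigenvalues in decreasing order, one has to swap the min-max and max-min characterizations accordingly.
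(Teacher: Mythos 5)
Your proof is correct and complete. The paper itself gives no proof of this statement --- it simply cites Courant--Hilbert and moves on --- so there is nothing to compare against; your argument is the standard one: reduce to the spectral theorem for $A=\phi^\dag\phi$ via $\langle Aw,w\rangle=|\phi(w)|^2$, exhibit the extremal subspaces $\mathrm{span}(v_\ell,\dots,v_n)$ and $\mathrm{span}(v_1,\dots,v_\ell)$ for the easy inequalities, and use the dimension count $(n-\ell+1)+\ell=n+1>n$ to force a nonzero vector in the intersection for the hard ones. Both halves are handled symmetrically and the remark about the increasing ordering of the $s_i$ (which swaps the usual min-max and max-min roles) is exactly the right point of care; you also implicitly correct the paper's typo $|\phi|_V|$ for $|\phi|_W|$ in the infimum.
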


We refer the reader to \cite{CH} for the proof. 
The second statement we need is 

\begin{lem}\label{lem-singular}
Let $E_{\ell m}(x) = E_{\ell}(x) \oplus ... \oplus E_m(x)$ where $\ell < m$ and the associated Lyapunov exponents are ordered as
$\lambda_{\ell}(x) < \lambda_{\ell+1}(x) < ... <\lambda_m(x)$. Then
$$
\lim_{k\rightarrow \infty} \frac{1}{k} log(||D\phi^k_x|_{E_{\ell m}}||) = \lambda_m(x)
\quad \text{ and } \quad
\lim_{k\rightarrow \infty} \frac{1}{k} log(m(D\phi^k_x|_{E_{\ell m}})) = \lambda_{\ell}(x)$$
\end{lem}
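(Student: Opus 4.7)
My plan is to establish each of the two limits by two-sided bounds. By the definition of a regular point, for every nonzero $v \in E_j(x)$ one has $\tfrac{1}{k}\log\|D\phi^k v\| \to \lambda_j$. Choosing unit vectors $v_m \in E_m$ and $v_\ell \in E_\ell$ and using that $\|D\phi^k|_{E_{\ell m}}\| \geq \|D\phi^k v_m\|$ and $m(D\phi^k|_{E_{\ell m}}) \leq \|D\phi^k v_\ell\|$ immediately gives the two easy bounds $\liminf \tfrac{1}{k}\log\|D\phi^k|_{E_{\ell m}}\| \geq \lambda_m$ and $\limsup \tfrac{1}{k}\log m(D\phi^k|_{E_{\ell m}}) \leq \lambda_\ell$. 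The other two inequalities require more care.

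For the matching upper bound on the norm, I would decompose an arbitrary unit vector $v \in E_{\ell m}(x)$ along the Oseledets splitting as $v = \sum_{j=\ell}^m v_j$ with $v_j \in E_j(x)$; the projections $v \mapsto v_j$ are bounded by a constant $M(x)$ depending only on the angles between the $E_j(x)$. Within each $E_j(x)$, an orthonormal basis consists of finitely many vectors each of Lyapunov exponent exactly $\lambda_j$, so taking a maximum over this finite set yields $\|D\phi^k|_{E_j}\| \leq C_j(x) e^{(\lambda_j + \varepsilon)k}$ for all sufficiently large $k$. Summing in $j$ and using that $\lambda_m$ is the largest exponent present gives $\|D\phi^k v\| \leq C(x,\varepsilon) e^{(\lambda_m + \varepsilon)k}$, so $\limsup \tfrac{1}{k}\log\|D\phi^k|_{E_{\ell m}}\| \leq \lambda_m$ and the norm limit follows.

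For the matching lower bound on the co-norm, I would pass to the inverse: because $D\phi^k$ sends $E_{\ell m}(x)$ isomorphically onto $E_{\ell m}(\phi^k x)$ by $D\phi$-invariance of each $E_j$, one has $m(D\phi^k|_{E_{\ell m}(x)}) = \|D\phi^{-k}|_{E_{\ell m}(\phi^k x)}\|^{-1}$. Applying the norm upper bound just established to $D\phi^{-1}$ at the (still regular) point $\phi^k x$, and noting that the Lyapunov exponents of $D\phi^{-1}$ on $E_j(\phi^k x)$ are $-\lambda_j$ with largest value $-\lambda_\ell$ on $E_{\ell m}(\phi^k x)$, yields $\|D\phi^{-k}|_{E_{\ell m}(\phi^k x)}\| \leq C(\phi^k x,\varepsilon) e^{(-\lambda_\ell + \varepsilon)k}$. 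The main obstacle is that $C(\phi^k x, \varepsilon)$ depends on the angles between Oseledets subspaces at the moving base point $\phi^k x$; at a regular point these angles are tempered, i.e.\ they vary subexponentially along the orbit, so $C(\phi^k x,\varepsilon) \leq C'(x,\varepsilon) e^{\varepsilon k}$, giving the required lower bound $m(D\phi^k|_{E_{\ell m}}) \geq C'(x,\varepsilon)^{-1} e^{(\lambda_\ell - 2\varepsilon)k}$. The case $k \to -\infty$ follows by the symmetric argument applied to $\phi^{-1}$.
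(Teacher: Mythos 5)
Your two ``easy'' bounds and your treatment of the norm are correct, and the norm half is in fact a genuinely different (and arguably cleaner) route than the paper's: you decompose a unit vector along the Oseledets blocks at the \emph{fixed} point $x$ and apply the triangle inequality, so every constant you need (projection norms, the block-wise constants $C_j(x)$) lives at the single point $x$. The paper instead introduces an adapted ``orthonormalized'' norm $|\cdot|_n$ on $E_\ell(\phi^n x)\oplus\cdots\oplus E_m(\phi^n x)$ in which the block-diagonal map $D\phi^n$ has norm exactly $\|D\phi^n|_{E_m}\|$ and co-norm exactly $m(D\phi^n|_{E_\ell})$ (once the blocks separate, their Corollary~\ref{cor-largen}), and then compares $|\cdot|_n$ with the ambient norm via constants that are subexponential because the angles between Oseledets subspaces degenerate at most subexponentially along the orbit. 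The payoff of the paper's scheme is that the co-norm is handled in \emph{forward} time by exactly the same mechanism as the norm, using only first-order estimates based at $x$ plus angle-temperedness.

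The co-norm step is where your argument has a gap in justification. Writing $m(D\phi^k|_{E_{\ell m}(x)})=\|D\phi^{-k}|_{E_{\ell m}(\phi^k x)}\|^{-1}$ is fine, but the constant $C(\phi^k x,\varepsilon)$ you then invoke does \emph{not} depend only on the angles between the Oseledets subspaces at $\phi^k x$: it also contains the block-wise constants $C_j^-(\phi^k x)$ for which $\|D\phi^{-n}|_{E_j(\phi^k x)}\|\le C_j^-(\phi^k x)\,e^{(-\lambda_j+\varepsilon)n}$ for all $n\ge 0$, i.e.\ the ``waiting time'' before the backward exponential estimates take hold at the \emph{moving} base point. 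Temperedness of these first-order constants along the orbit is also a consequence of Lyapunov regularity, but it is a strictly stronger input than angle-temperedness and must be cited or proved; as written your sentence hides it. The cheapest repair, which stays within your own framework and matches the paper's logic, is to argue in forward time: for a unit minimizer $v=\sum_j v_j$ at time $k$, some block satisfies $|v_{j}|\ge 1/(m-\ell+1)$, and since $D\phi^k v_{j}$ is the $E_{j}(\phi^k x)$-component of $D\phi^k v$ one gets $|D\phi^k v|\ge |D\phi^k v_{j}|/\|\Pi_{j}^{\phi^k x}\|\ge m(D\phi^k|_{E_{j}(x)})\,|v_{j}|/\|\Pi_{j}^{\phi^k x}\|$; now $m(D\phi^k|_{E_{j}(x)})\ge c_{j}(x)e^{(\lambda_{j}-\varepsilon)k}\ge c_{j}(x)e^{(\lambda_\ell-\varepsilon)k}$ uses only constants at $x$, and only the projection norm $\|\Pi_{j}^{\phi^k x}\|$ needs to be tempered along the orbit.
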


Lemma \ref{lem-singular}
is  part of the Multiplicative Ergodic Theorem of Oseledets, and is stated in the  notes \cite{Boc} and is also stated and proved, though with a somewhat different notation than that used here, in \cite[Theorem 3.3.10]{Arn98}. 

\begin{proof}[Proof of Proposition \ref{prop-lyasing}]
For every \( 1\leq  i \leq d  \)
and any subspaces \(  V', W'\subset E_{i}  \)
 with \(  dim(W') = \ell-d_{i-1} \) and \( dim (V')=d_i-\ell+1 \)
we write
$
W = E_1 \oplus E_2 \oplus ... \oplus E_{i-1} \oplus W'
$ and
$
V = V' \oplus E_{i+1} \oplus ... \oplus E_{n}
$
Then, by \eqref{courantfischer} we have
$
|D\phi^k|_W| \geq s^k_\ell \geq m(D\phi^k|_V).
$
Denote $E_{1\ell} = E_1 \oplus ... \oplus E_{\ell}$ and $E_{\ell n} = E_{\ell} \oplus ... \oplus E_n$.  Since $E_{1\ell} \supset W$ and ${E_{\ell n}}\supset V$  one has that $||D\phi^k|_{E_{1\ell}}|| \geq ||D\phi^k|_W||$ and $m(D\phi^k|_V) \geq m(D\phi^k|_{E_{\ell n}})$. Therefore
$ ||D\phi^k|_{E_{1\ell}}|| \geq s^k_{\ell} \geq m(D\phi^k|_{E_{\ell n}})$
and the result follows by Lemma \ref{lem-singular}.
\end{proof}

\bigskip
Stefano Luzzatto \\
\textsc{Abdus Salam International Centre for Theoretical Physics (ICTP), Strada Costiera 11, Trieste, Italy}\\
 \textit{Email address:} \texttt{luzzatto@ictp.it}

\medskip
Sina Tureli \\
\textsc{Abdus Salam International Centre for Theoretical Physics (ICTP), Strada Costiera 11, Trieste, Italy} and \textsc{International School for Advanced Studies (SISSA), Via Bonomea 265, Trieste}\\ 
 \textit{Email address:} \texttt{sinatureli@gmail.com}

\medskip
Khadim M. War\\
\textsc{Abdus Salam International Centre for Theoretical Physics (ICTP), Strada Costiera 11, Trieste, Italy} and \textsc{International School for Advanced Studies (SISSA), Via Bonomea 265, Trieste}\\ 
 \textit{Email address:} \texttt{kwar@ictp.it}

\end{document}